\documentclass[12pt, A4paper]{article}
\def\aa{\alpha}   \def\ba{\beta}       
\def\ga{\gamma}      \def\ta{\theta}   
   
   \def\Sa{\Sigma}     
  
    
\def\iy{\infty}   \def\fc{\frac} \def\st{\sqrt}
  \def\pl{\partial}
  
\def\na{\nabla}

\def\ev{\equiv} 

      
\def\mb{\mathbb}        
\def\mm{\mathrm}
\def\lt{\left}       \def\rt{\right}
\def\lae{\langle}    \def\rae{\rangle}
\def\ls{\limits}     \def\mx{\mbox}
             
\def\ue{\underline}       

\def\de{\displaystyle}
\usepackage{amsmath, amssymb, amsthm, psfrag, graphicx}
\theoremstyle{plain}
\newtheorem{thm}{\textrm{Theorem}}
\newtheorem*{thm1}{\textrm{Theorem}}

\newtheorem{prop}[thm]{\textrm{Proposition}}

\theoremstyle{definition}

\theoremstyle{remark}

\newtheoremstyle{note}
     {3pt}
     {3pt}
     {}
     {}
     {\itshape}
     {}
     {.5em}
     {}
\theoremstyle{note}

\textheight=23.5cm \textwidth=15cm \voffset=-14mm \hoffset=-9mm 
\author{Kuo-Wei Lee}
\title{Existence of constant mean curvature foliation in the extended Schwarzschild spacetime}
\begin{document}
\fontsize{12}{18pt plus.5pt minus.4pt}\selectfont
\maketitle
\begin{abstract}
We construct a $T$-axisymmetric, spacelike, spherically symmetric, constant mean curvature hypersurfaces foliation in the Kruskal extension with properties that
the mean curvature varies in each slice and ranges from minus infinity to plus infinity.
This family of hypersurfaces extends the {\small CMC} foliation discussions posted by Malec and \'{O} Murchadha in 2009 \cite{MO2}.

\end{abstract}
\section{Introduction}
Spacelike constant mean curvature ({\small CMC}) hypersurfaces in spacetimes are very important objects in general relativity.
They are broadly used in the analysis on Einstein constraint equations
\cite{CYY, L} and in the gauge condition in the Cauchy problem of the Einstein equations \cite{AM, CYR}.
In addition, {\small CMC} foliation property is identified as the absolute time function in cosmological spacetimes \cite{Y}.

In the field of relativistic cosmology, how to define a canonical absolute time function is an important issue.
At first, people considered the cosmological time function,
which is defined by the supremum of the lengths of all past-directed timelike curves starting at some point.
The idea of the cosmological time function is natural, but bad regularity is its drawback.

In 1971, York \cite{Y} suggested the {\small CMC} time function, which is a real value function $f(x)$
defined on a spacetime such that every level set $\{f(x)=H\}$ is a Cauchy hypersurface with constant mean curvature $H$.
In cosmological spacetimes, by the maximum principle, if the {\small CMC} time function exists, then it is unique.
Furthermore, {\small CMC} time function has better regularity than the cosmological time function.
These properties indicate another viewpoint of the absolute time function.
By definition, if the {\small CMC} time function exists,
the spacetime is foliated by Cauchy hypersurfaces with constant mean curvature,
and the mean curvature of these Cauchy hypersurfaces increases with time.
This phenomena leads us to concern about the {\small CMC} foliation problem in spacetimes.

Many {\small CMC} foliation results are proved for cosmological spacetimes (spatically compact spacetimes) with constant sectional curvature in
\cite{ABBZ} and in its references.
However, {\small CMC} foliation property are not well-understood for spatically noncompact spacetimes such as the Schwarzschild spacetime (Kruskal extension),
which is the simplest model of a universe containing a star.
In \cite{MO}, Malec and \'{O} Murchadha constructed a family of $T$-axisymmetric, spacelike, spherically symmetric,
constant mean curvature ({\small TSS-CMC}) hypersurfaces in the Kruskal extension,
where each slice has the same mean curvature, and they conjectured this family foliates the Kruskal extension.
In \cite{KWL}, the author used the shooting method and Lorentzian geometric analysis to prove the existence and uniqueness of the Dirichlet problem for
{\small SS-CMC} equation with symmetric boundary data in the Kruskal extension.
As an application, the author completely proved the Malec and \'{O} Murchadha's {\small TSS-CMC} foliation conjecture.

In \cite{MO2}, Malec and \'{O} Murchadha discussed different {\small TSS-CMC} foliation property.
They asked whether there is a {\small TSS-CMC} foliation with varied constant mean curvature in each slice.
One result is that if the relation between the mean curvature $H$ and the {\small TSS-CMC} hypersurface parameter $c$ are proportional, that is, $c=-8M^3H$,
then there is a family of {\small TSS-CMC} hypersurfaces so that
$H$ ranges from minus infinity to plus infinity, but all hypersurfaces intersect at the bifurcation sphere (the origin in the Kruskal extension).

In this paper, we will construct another family of {\small TSS-CMC} hypersurfaces with varied constant mean curvature in each slice.
If $H$ and $c$ have a nonlinear relation,
then there exists a {\small TSS-CMC} hypersurfaces foliation in the Kruskal extension.
The statement of the main theorem is the following:
\begin{thm1}
There exists a family of hypersurfaces $\{\Sa_{H(c),c}\}$, $c\in\mb{R}$ in the Kruskal extension satisfying the following properties:
\begin{itemize}
\item[\rm(a)] Every $\Sa_{H(c),c}$ is a $T$-axisymmetric, spacelike, spherically symmetric, constant mean curvature hypersurface.\\[-10mm]
\item[\rm(b)] Any two hypersurfaces in $\{\Sa_{H(c),c}\}$ are disjoint.\\[-10mm]
\item[\rm(c)] Every point $(T',X')$ in the Kruskal extension belongs to $\Sa_{H(c'),c'}$ for some $c'\in\mb{R}$.\\[-10mm]
\item[\rm(d)] When $\{\Sa_{H(c),c}\}$ foliates the Kruskal extension from the bottom to the top,
the corresponding constant mean curvature $H$ ranges from $-\iy$ to $\iy$ and the parameter $c$ ranges from $\iy$ to $-\iy$.\\[-10mm]
\item[\rm(e)] $\{\Sa_{H(c),c}\}$ is invariant under the reflection with respect to the $X$-axis.
\end{itemize}
\end{thm1}

It is remarkable that by similar argument,
we can construct many different {\small TSS-CMC} hypersurfaces foliations with varied $H$ and with $X$-axis symmetry so that the
{\small TSS-CMC} foliation in the Kruskal extension is not unique.
Furthermore, we can also get {\small TSS-CMC} hypersurfaces foliations with varied $H$ but without $X$-axis symmetry.
By Lorentzian isometry, there are {\small SS-CMC} hypersurfaces foliations with varied $H$ but without $T$-axis symmetry.

The organization of this paper is as follows.
In section~\ref{Preliminary}, we first give a brief introduction to the Schwarzschild spacetime and Kruskal extension,
and then we summarize results of the {\small TSS-CMC} hypersurfaces in the Kruskal extension in order to construct a {\small TSS-CMC} foliation.
The main theorem is stated and proved in section~\ref{Proof}.
Some discussions about {\small TSS-CMC} foliation property are in section~\ref{discussion}.

The author would like to thank Yng-Ing Lee, Mao-Pei Tsui, and Mu-Tao Wang for their interests and discussions.
The author is supported by the MOST research grant 103-2115-M-002-013-MY3.

\section{Preliminary} \label{Preliminary}
\subsection{The Kruskal extension}
In this paper, we mainly focus on the Kruskal extension, which is the maximal analytic extended Schwarzschild spacetime.
The Schwarzschild spacetime is a $4$-dimensional time-oriented Lorentzian manifold equipped with the metric
\begin{align*}
\mathrm{d}s^2=-\left(1-\frac{2M}r\right)\mathrm{d}t^2+\frac1{\left(1-\frac{2M}r\right)}\,\mathrm{d}r^2
+r^2\,\mathrm{d}\theta^2+r^2\sin^2\theta\,\mathrm{d}\phi^2,
\end{align*}
where $M>0$ is a constant.
The metric is not defined at $r=2M$, but in fact it is a coordinate singularity.
That is, after coordinates change, the metric is smooth at $r=2M$:
\begin{align}
\mathrm{d}s^2&=\frac{16M^2\mathrm{e}^{-\frac{r}{2M}}}{r}(-\mathrm{d}T^2+\mathrm{d}X^2)+r^2\,\mathrm{d}\theta^2+r^2\sin^2\theta\,\mathrm{d}\phi^2,
\label{KruskalMetric}
\end{align}
where
\begin{align}
\left\{
\begin{array}{l}
\displaystyle(r-2M)\,\mathrm{e}^{\frac{r}{2M}}=X^2-T^2\\
\displaystyle\frac{t}{2M}=\ln\left|\frac{X+T}{X-T}\right|. \label{trans}
\end{array}
\right.
\end{align}
The Kurskal extension is the union of two Schwarzschild spacetimes equipped with the extended metric (\ref{KruskalMetric}).
Figure~\ref{KruskalSimple} points out the correspondences between the Kruskal extension (left figure, $T$-$X$ plane)
and Schwarzschild spacetimes (right figure, $t$-$r$ plane).
We refer to Wald's book \cite{W} or the paper \cite{LL1} for more discussions on the Kruskal extension.

\begin{figure}[h]
\psfrag{A}{\tt I}
\psfrag{B}{\tt I$\!$I}
\psfrag{C}{\tt I$\!$I'}
\psfrag{D}{\tt I'}
\psfrag{E}{$U$}
\psfrag{F}{$V$}
\psfrag{X}{$X$}
\psfrag{T}{$T$}
\psfrag{r}{$r$}
\psfrag{t}{$t$}
\psfrag{M}{\tiny$2M$}
\psfrag{P}{\tiny $\partial_T$}
\centering
\includegraphics[height=50mm,width=87mm]{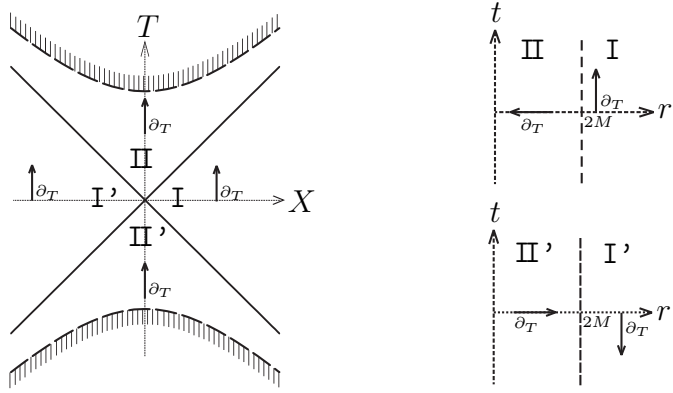}
\caption{The Kruskal extension and the Schwarzschild spacetimes.} \label{KruskalSimple}
\end{figure}

Remark that each point in the Kruskal $T$-$X$ plane or the Schwarzschild $t$-$r$ plane is topologically a sphere $\mb{S}^2$,
which is parameterized by $\ta$ and $\phi$.
In this article, we are interested in the spherically symmetric hypersurfaces.
It implies that every such hypersurface is a curve in both $T$-$X$ and $t$-$r$ plane.
For notation convenience, we will ignore parameters $\ta$ and $\phi$ in this paper.

We take $\partial_T$ as a future-directed timelike vector field in the Kruskal extension,
which is also pointed out in Figure~\ref{KruskalSimple}.
Once $\pl_T$ is chosen, for a spacelike hypersurface $\Sa$,
we will choose $\vec{n}$ as the future-directed unit normal vector of $\Sa$ in the Kruskal extension,
and the mean curvature $H$ of $\Sa$ is defined by $H=\fc13g^{ij}\lae\na_{e_i}\vec{n},e_j\rae$,
where $\{e_i\}_{i=1}^3$ is a basis on $\Sa$.

\subsection{$T$-axisymmetric, spacelike, spherically symmetric, constant mean curvature hypersurfaces in the Kruskal extension}
Let $\Sa:(T=F(X),X)$ be a spacelike, spherically symmetric, constant mean curvature ({\small SS-CMC} for short) hypersurface in the Kruskal extension.
In \cite{LL1}, we computed the {\small SS-CMC} equation:
\begin{align}
F''(X)&+\mbox{e}^{-\frac{r}{2M}}\left(\frac{6M}{r^2}-\frac1r\right)(-F(X)+F'(X)X)(1-(F'(X))^2)\notag\\
&+\frac{12HM\mbox{e}^{-\frac{r}{4M}}}{\sqrt{r}}(1-(F'(X))^2)^{\frac32}=0, \label{CMCequation}
\end{align}
where the spacelike condition is $1-(F'(X))^2>0$, and $r=r(T,X)=r(F(X),X)$
satisfies the equation (\ref{trans}), namely, $(r-2M)\,\mathrm{e}^{\frac{r}{2M}}=X^2-T^2=X^2-(F(X))^2$.

Since the equation (\ref{CMCequation}) contains $r$, which is a nonlinear relation between $T=F(X)$ and $X$,
it is challenging to get results from the equation (\ref{CMCequation}) such as the existence, uniqueness, and behavior of the solution.
Instead of dealing with the equation~(\ref{CMCequation}), in papers \cite{KWL, LL1} and \cite{LL2},
we solved and analyzed the {\small SS-CMC} equation in each Schwarzschild spacetime region.
Suppose that $\Sa:(t=f(r),r)$ is an {\small SS-CMC} hypersurface in the Schwarzschild spacetime,
then $f(r)$ satisfies the following {\small SS-CMC} equation:
\begin{align}
f''+\left(\left(\frac1{h}-(f')^2h\right)\left(\frac{2h}{r}+\frac{h'}2\right)+\frac{h'}{h}\right)f'\pm 3H\left(\frac1{h}-(f')^2h\right)^{\frac32}=0,
\label{SSCMCeqn}
\end{align}
where $h(r)=1-\fc{2M}{r}$, and the spacelike condition is $\frac1{h}-(f')^2h>0$.
Remark that the choice of $\pm$ signs in (\ref{SSCMCeqn})
depends on different regions and different pieces of {\small SS-CMC} hypersurfaces.
Since the equation (\ref{SSCMCeqn}) is a second order ordinary differential equation,
the solution is solved explicitly, and we can completely characterize {\small SS-CMC} hypersurfaces in the Kruskal extension through relations (\ref{trans}).

Here we summarize results in \cite{LL1} and \cite{LL2} about the construction of the $T$-axisymmetric {\small SS-CMC} ({\small TSS-CMC}) hypersurfaces.
These results will be used for further discussions in this article.
In paper \cite{LL1}, the solution of the equation (\ref{SSCMCeqn}) in the Schwarzschild interior which maps to the Kruskal extension {\tt I\!I'} is
\begin{align*}
f(r;H,c,\bar{c})=\lt\{
\begin{array}{ll}
\de\int_{r_0}^r\fc{l(x;H,c)}{-h(x)\st{l^2(x;H,c)-1}}\,\mm{d}x+\bar{c}, & \mx{if } f'(r)>0 \\[5mm]
\de\int_{r_0}^r\fc{l(x;H,c)}{h(x)\st{l^2(x;H,c)-1}}\,\mm{d}x+\bar{c}, & \mx{if } f'(r)<0,
\end{array}\rt.
\end{align*}
where $r_0$ is a point in the domain of $f(r)$, $l(r;H,c)=\fc1{\st{-h(r)}}\lt(Hr+\fc{c}{r^2}\rt)$,
and $c, \bar{c}$ are two constants of integration.
Here we require $l(r;H,c)>1$ so that the function $f(r)$ is meaningful, and it is equivalent to $c>-Hr^3+r^{\fc32}(2M-r)^{\fc12}$,
so it is natural to define the function
\begin{align*}
\tilde{k}_H(r)=-Hr^3+r^{\fc32}(2M-r)^{\fc12}
\end{align*}
to analyze the domain of the solution $f(r)$.

Now we look at the case $H\leq 0$.
Given $H$, in Figure~\ref{CMCpaperFunctionK}, the function $\tilde{k}_H(r)$ has a maximum value $C_H$ at $r=R_H$.
Denote the increasing part and decreasing part of the function $\tilde{k}_H(r)$ by $\tilde{k}^+_H(r)$ and $\tilde{k}^-_H(r)$, respectively.
For $c\in(0,C_H)$, the solution of $\tilde{k}^+_H(r)=c$ is denoted by
$r=\tilde{r}_{H,c}^+$, then $(0,\tilde{r}_{H,c}^+]$ is the domain of the {\small SS-CMC} solution $f(r)$.
Remark that $r=\tilde{r}_{H,c}^+$ belongs to the domain of $f(r)$ because the behavior $f'(r)\sim O((r-\tilde{r}_{H,c}^+)^{-\fc12})$ implies that
$f(\tilde{r}_{H,c}^+)$ is a finite value.
Consider the {\small SS-CMC} hypersurface which is the union of two graphs of $t=f(r)$, where one satisfies $f'(r)>0$ and the other satisfies $f'(r)<0$,
and two graphs are smoothly joined at the point $(t,r)=(0,r_{H,c}^+)$.
This {\small SS-CMC} hypersurface is symmetric about $t=0$.
Since $t=0$ in the Schwarzschild interior is the $T$-axis in the Kruskal extension {\tt I\!I'},
this hypersurface maps to a {\small TSS-CMC} hypersurface $\tilde{\Sa}_{H,c}^+$ in the Kruskal extension {\tt I\!I'},
and $\tilde{\Sa}_{H,c}^+$ intersects the $T$-axis at $T=-\st{2M-\tilde{r}_{H,c}^+}\,\mm{e}^{\fc{\tilde{r}_{H,c}^+}{4M}}$.
See Figure~\ref{CMCpaperFunctionK}.

\begin{figure}[h]
\centering
\psfrag{X}{$X$}
\psfrag{T}{$T$}
\psfrag{r}{$r$}
\psfrag{t}{$t$}
\psfrag{A}{\small$R_H$}
\psfrag{B}{$C_H$}
\psfrag{C}{$c$}
\psfrag{D}{\small$\tilde{r}_{H,c}^+$}
\psfrag{J}{\small$\tilde{r}_{H,c}^-$}
\psfrag{E}{$\tilde{k}_H^+(r)$}
\psfrag{F}{$\tilde{k}_H^-(r)$}
\psfrag{G}{\small$2M$}
\psfrag{H}{$\tilde{\Sa}_{H,c}^+$}
\psfrag{I}{$\tilde{\Sa}_{H,c}^-$}
\psfrag{K}{$\tilde{\Sa}_{H,C_H}$}
\includegraphics[height=48mm,width=115mm]{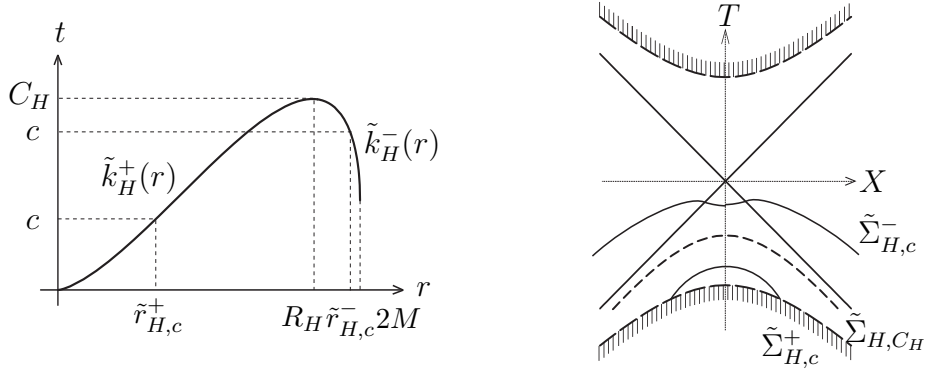}
\caption{Each point on the graph of $\tilde{k}_H(r)$ determines a {\small TSS-CMC}
hypersurface and its $T$-intercept in the Kruskal extension.} \label{CMCpaperFunctionK}
\end{figure}

For $c\in(-8M^3H,C_H)$, the solution of $\tilde{k}^-_H(r)=c$ is denoted by $r=\tilde{r}_{H,c}^-$,
then $[\tilde{r}_{H,c}^-,\iy)$ is the domain of the {\small SS-CMC} solution $f(r)$.
Remark that $f(r)$ is defined at $r=2M$ in the sense of Kruskal extension,
and $\tilde{r}_{H,c}^-$ belongs to the domain because of $f'(r)\sim O((r-r_{H,c}^-)^{-\fc12})$.
The union of graphs of functions $t=f(r)$ which are smoothly joined at the point $(t,r)=(0,r_{H,c}^-)$
maps to a {\small TSS-CMC} hypersurface $\tilde{\Sa}_{H,c}^-$ in the Kruskal extension {\tt I}, {\tt I\!I}, and {\tt I'}.
Furthermore, $\tilde{\Sa}_{H,c}^-$ intersects the $T$-axis at $T=-\st{2M-\tilde{r}_{H,c}^-}\,\mm{e}^{\fc{\tilde{r}_{H,c}^-}{4M}}$.

The dotted curve between $\tilde{\Sa}_{H,c}^+$ and $\tilde{\Sa}_{H,c}^-$ in Figure~\ref{CMCpaperFunctionK}
is the {\small TSS-CMC} hypersurface $\tilde{\Sa}_{H,C_H}$, which corresponds to the point with maximum value of $\tilde{k}_H(r)$.
The hypersurface $\tilde{\Sa}_{H,C_H}$ is a hyperbola $X^2-T^2=(R_H-2M)\mm{e}^{\fc{R_H}{2M}}$ with $T<0$
in the Kruskal extension {\tt I\!I'}, and it is a cylindrical hypersurface $r=R_H$ in the Schwarzschild interior.

From the above discussion,
we establish an one-to-one correspondence from each point on the graph of $\tilde{k}_H(r)$ to a {\small TSS-CMC} hypersurface $\tilde{\Sa}_{H,c}$.

\subsection{The construction of TSS-CMC foliation with varied $H$} \label{subsectionconstruction}
In order to construct a {\small TSS-CMC} hypersurfaces foliation with varied $H$ in each slice,
we will view $H$ as a variable and thus consider the two variables function
\begin{align*}
\tilde{k}(H,r)=-Hr^3+r^{\fc32}(2M-r)^{\fc12},
\end{align*}
where $r\in[0,2M]$ and $H\leq 0$.
Here we only consider $H\leq 0$ because we will use the symmetry property to get $H\geq 0$ part.
First we prove the following Proposition.

\begin{prop} \label{Proposition1}
For the function $\tilde{k}(H,r)=-Hr^3+r^{\fc32}(2M-r)^{\fc12}$ where $r\in[0,2M]$ and $H\leq 0$,
there exists a function $y(r)$ defined on $(0,2M]$ such that
\begin{align}
\lt\{
\begin{array}{l}
\de\fc{\mm{d}y}{\mm{d}r}\neq\fc{3y}{r}+\fc{r^{\fc12}(-3M+r)}{(2M-r)^{\fc12}} \\[3mm]
\de\fc{\mm{d}y}{\mm{d}r}<0 \mx{ for all } r\in(0,2M) \\[2mm]
\de y(2M)=0 \mx{ and } \lim\ls_{r\to 0^+}y(r)=\iy.
\end{array}
\rt. \label{Conditions}
\end{align}
\end{prop}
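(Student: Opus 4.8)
The plan is to exploit the fact that the forbidden right-hand side in (i) is \emph{itself} the linear ODE satisfied by the curves $\tilde{k}_H$. Writing $L[y]:=y'-\frac{3y}{r}-\frac{\sqrt{r}\,(r-3M)}{\sqrt{2M-r}}$, a direct check with the integrating factor $r^{-3}$ shows that the general solution of $L[y]=0$ is $y=-Hr^3+r^{3/2}(2M-r)^{1/2}=\tilde{k}(H,r)$, with $H$ the constant of integration. Thus (i) says precisely that the graph of $y$ is nowhere tangent to the family $\{\tilde{k}_H\}$; equivalently, defining $H(r)$ by $\tilde{k}(H(r),r)=y(r)$ and differentiating gives $L[y]=-r^3H'(r)$, so (i) is the same as $H'(r)\neq 0$. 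This is why (i), not the monotonicity (ii), is the delicate condition: a generic decreasing curve meeting the boundary data---for instance $y=(2M-r)/r$---is forced to become tangent to some $\tilde{k}_H$ at an interior point, and there $H'$ vanishes.

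To avoid this I will instead arrange the one-sided transversality $L[y]<0$ on all of $(0,2M)$, which certainly implies (i). The boundary behaviour dictates the shape of $y$: near $r=2M$ the inhomogeneous term $\frac{\sqrt{r}\,(r-3M)}{\sqrt{2M-r}}$ blows up like $-M\sqrt{2M}\,(2M-r)^{-1/2}$, so $L[y]<0$ forces $y'$ to diverge at that rate and hence $y\sim C(2M-r)^{1/2}$ with $C>(2M)^{3/2}$, while near $r=0$ one only needs $y\to\infty$. These asymptotics suggest the one-parameter ansatz $y(r)=A\,\frac{\sqrt{2M-r}}{r}$ with $A>0$, which makes (iii) immediate, since $y(2M)=0$ and $y(r)\to\infty$ as $r\to 0^+$.

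With this ansatz the remaining conditions reduce to elementary inequalities. A one-line differentiation gives $y'=-A\,\frac{4M-r}{2r^2\sqrt{2M-r}}<0$ on $(0,2M)$, so (ii) holds for every $A>0$. For (i) I compute $L[y]$ and clear the positive factor $r^2\sqrt{2M-r}$, obtaining $r^2\sqrt{2M-r}\,L[y]=-\tfrac12 A(16M-7r)+r^{5/2}(3M-r)$. Since $16M-7r\ge 2M$ and $r^{5/2}(3M-r)\le (2M)^{5/2}M$ on $[0,2M]$, the right-hand side is bounded above by $M\big((2M)^{5/2}-A\big)$, which is strictly negative as soon as $A>(2M)^{5/2}$. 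Hence for any such $A$ the function $y(r)=A\sqrt{2M-r}/r$ satisfies (i), (ii) and (iii) simultaneously, and (reassuringly) the threshold $A>(2M)^{5/2}$ is exactly what forces the coefficient $C=A/(2M)$ to exceed $(2M)^{3/2}$.

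The only genuine obstacle is (i), and the argument above isolates it in two moves: first recognising that the prohibited slope is the $\tilde{k}_H$-ODE, so that what must be controlled is transversality (a nonvanishing $H'$) rather than mere monotonicity of $y$; and second choosing the amplitude $A$ large enough that the competition between the $-\tfrac12 A(16M-7r)$ term and the bounded term $r^{5/2}(3M-r)$ is won uniformly on $(0,2M)$. Everything else is a direct verification.
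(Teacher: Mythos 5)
Your proof is correct, and it rests on the same key idea as the paper's: both arguments strengthen the non-tangency condition (i) to the strict one-sided inequality $y'-\frac{3y}{r}-\frac{r^{1/2}(r-3M)}{(2M-r)^{1/2}}<0$ on $(0,2M)$ (the paper writes this as $L[y]=-h(r)$ with $h>0$). Where you differ is in execution: the paper solves the resulting linear ODE with the integrating factor $r^{-3}$ to get the integral representation $y(r)=r^{3/2}(2M-r)^{1/2}+r^3\int_r^{2M}h(x)x^{-3}\,\mathrm{d}x$, then chooses $h(r)=Cr^{-p}$ with $p>1$ and $C$ large and verifies (ii) using the global maximum of $\frac{r^{1/2}(3M-2r)}{(2M-r)^{1/2}}$; you instead guess the closed form $y(r)=A\sqrt{2M-r}/r$ from the boundary asymptotics and verify all three conditions by two elementary inequalities, with the single threshold $A>(2M)^{5/2}$. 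I checked your computation: $r^2\sqrt{2M-r}\,L[y]=-\tfrac12A(16M-7r)+r^{5/2}(3M-r)$ is right, $16M-7r\ge 2M$ on $[0,2M]$, and $r^{5/2}(3M-r)$ is increasing there (its critical point $15M/7$ lies outside the interval), so the bound $M\bigl((2M)^{5/2}-A\bigr)<0$ is valid. Your approach buys explicitness and avoids the integral formula entirely, at the cost of generality: the paper's $h$-parametrization makes transparent the remark (used later in the Discussion section) that the foliation curve $\ga(c)$ is far from unique, whereas your argument produces one specific family. Your opening observation that $L[y]=-r^3H'(r)$, so that (i) is exactly transversality to the family $\{\tilde k_H\}$, is a clean way to see why (i) is the essential condition, and it is consistent with how the paper uses the proposition.
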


\begin{proof}
First of all, we compute
\begin{align*}
\fc{\pl \tilde{k}}{\pl r}(H,r)
=-3Hr^2+\fc{r^{\fc12}(3M-2r)}{(2M-r)^{\fc12}}
=\fc{3y}{r}+\fc{r^{\fc12}(-3M+r)}{(2M-r)^{\fc12}}.
\end{align*}
Here we replace $H$ with $y$ and $r$ by the relation $y=-Hr^3+r^{\fc32}(2M-r)^{\fc12}$ in the last equality.
To find the function $y(r)$, it suffices to find a function $h(r)>0$ such that
\begin{align}
\lt\{
\begin{array}{l}
\de\fc{\mm{d}y}{\mm{d}r}-\fc{3y}{r}=\fc{r^{\fc12}(-3M+r)}{(2M-r)^{\fc12}}-h(r) \\
\de y(2M)=0.
\end{array}\rt. \label{eqnhfunction}
\end{align}
When multiplying the integrating factor $\mm{e}^{\int-\fc3r\,\mm{d}r}=r^{-3}$ on both sides of the differential equation (\ref{eqnhfunction}), it becomes
\begin{align*}
\fc{\mm{d}}{\mm{d}r}\lt(r^{-3}y(r)\rt)&=\fc{-3M+r}{r^{\fc52}(2M-r)^{\fc12}}-\fc{h(r)}{r^3}.
\end{align*}
After integration, the function $y(r)$ is solved:
\begin{align*}
y(r)=r^{\fc32}(2M-r)^{\fc12}+r^3\int_r^{2M}\fc{h(x)}{x^3}\,\mm{d}x.
\end{align*}
Next, we calculate
\begin{align*}
y'(r)=\fc{r^{\fc12}(3M-2r)}{(2M-r)^{\fc12}}+3r^2\int_r^{2M}\fc{h(x)}{x^3}\,\mm{d}x-h(r).
\end{align*}
Consider the function $h(r)$ is of the form $h(r)=Cr^{-p}$, where $C$ and $p$ are positive numbers to be determined. Then
\begin{align*}
y'(r)=\fc{r^{\fc12}(3M-2r)}{(2M-r)^{\fc12}}-C\lt(\fc{3}{(p+2)(2M)^{p+2}}+\fc{(p-1)}{(p+2)r^p}\rt).
\end{align*}
Since the function $g(r)=\fc{r^{\fc12}(3M-2r)}{(2M-r)^{\fc12}}$ has a global maximum value
$g(r_*)=\st{6\st{3}-9}M$ at $r_*=\fc{(3-\st{3})M}{2}$,
we can choose any value $p>1$ and then choose the constant $C$ large enough such that $y'(r)<0$ for all $r\in(0,2M)$.

Finally, we check the limit behavior:
\begin{align*}
\lim_{r\to 0^+} y(r)
&=\lim_{r\to 0^+}\lt(r^{\fc32}(2M-r)^{\fc12}+r^3\int_r^{2M}\fc{C}{x^{p+3}}\,\mm{d}x\rt) \\
&=C(p+2)\lim_{r\to 0^+}\lt(\fc{1}{r^{p-1}}-\fc{r^3}{(2M)^{p+2}}\rt)
\to\iy.
\end{align*}
\end{proof}

In the following paragraphs, we will use the notation $\tilde{k}_H(r)$ if we consider the function $\tilde{k}(H,r)$ with fixed $H$.
From Proposition~\ref{Proposition1},
we find a strictly decreasing function $y(r)$ such that the equation $y(r)=\tilde{k}_{H}(r)$ has a unique solution for every $H\leq 0$.
Figure~\ref{CMCpaperFamilyKtoHypersurfaces}~(a) illustrates the curve $\ga$, which is the graph of $y(r)$,
and we set the curve $\ga(c)$ with parameter $c$ by $c=y(r)$.
Since $y(r)$ is strictly decreasing, we have $r=y^{-1}(c)$, and the mean curvature can be expressed as $H(c)$ by the relation $c=-Hr^3+r^{\fc32}(2M-r)^{\fc12}$.
Thus there is an one-to-one correspondence from each point on $\ga(c)$ to a {\small TSS-CMC} hypersurface $\Sa_{H(c),c}$,
where $\Sa_{H(c),c}$ intersects the $T$-axis at $T=-\st{2M-y^{-1}(c)}\,\mm{e}^{\fc{y^{-1}(c)}{4M}}$,
as Figure~\ref{CMCpaperFamilyKtoHypersurfaces}~(c) showed.

\begin{figure}[h]
\psfrag{r}{$r$}
\psfrag{t}{$t$}
\psfrag{X}{$X$}
\psfrag{T}{$T$}
\psfrag{G}{$\ga(c)$}
\psfrag{A}{$\aa$}
\psfrag{P}{$\ga(-c)$}
\psfrag{B}{-$\aa$}
\psfrag{a}{(a)}
\psfrag{b}{(b)}
\psfrag{c}{(c)}
\psfrag{D}{$c$}
\psfrag{C}{$C$}
\psfrag{H}{\footnotesize $\stackrel{\de\Sa_{H(c),c}}{c>C}$}
\psfrag{K}{$\Sa_{H(C),C}$}
\psfrag{I}{\footnotesize $\stackrel{\de\Sa_{H(c),c}}{0<c<C}$}
\includegraphics[height=50mm,width=136mm]{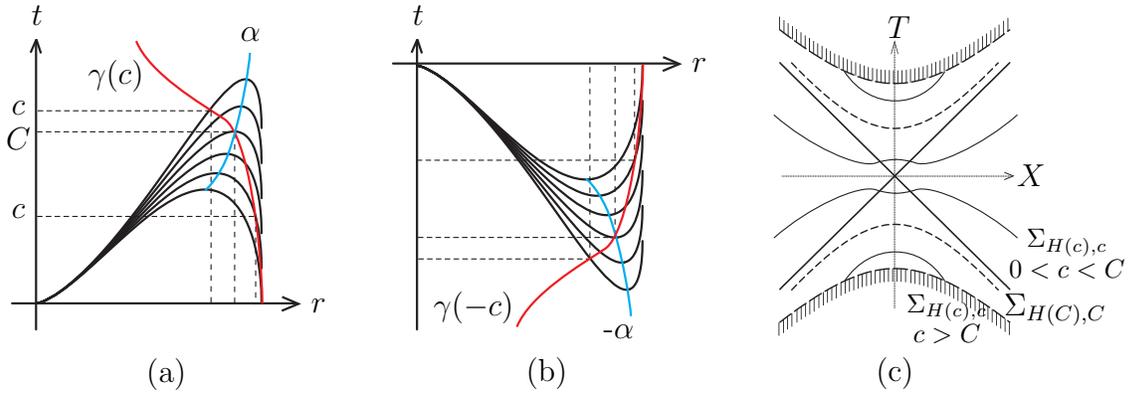}
\caption{The curve $\ga(c)$ is the graph of a decreasing function and each point on $\ga(c)$ corresponds to a {\small TSS-CMC} hypersurface $\Sa_{H(c),c}$
in the Kruskal extension.} \label{CMCpaperFamilyKtoHypersurfaces}
\end{figure}

In Figure~\ref{CMCpaperFamilyKtoHypersurfaces}~(a), we trace another curve $\aa$.
The curve $\aa$ consists of all points $(t,r)$ satisfying $t=\max\ls_{r\in[0,2M]}\tilde{k}_H(r)$ for every $H\leq 0$.
It is easy to know that the curve $\aa$ is a graph of an increasing function so that the curve $\ga$ and $\aa$ intersects once,
and we denote the intersection point by $(C,R)$.
The point $(C,R)$ corresponds to the hyperbola $X^2-T^2=(R-2M)\mm{e}^{\fc{R}{2M}}$ with $T<0$,
and it is the dotted curve in Figure~\ref{CMCpaperFamilyKtoHypersurfaces}~(c).

When $c=0$, we get $r=2M$ and $H=0$.
The {\small TSS-CMC} hypersurfaces $\Sa_{H(0),0}$ is a maximal hypersurface passing through $(T,X)=(0,0)$
so that $\Sa_{H(0),0}$ is $T\ev 0$, or $X$-axis.
So far, we have constructed the {\small TSS-CMC} foliation in the region $T\leq 0$.

Next, we consider another two variables function
\begin{align*}
k(H,r)=-Hr^3-r^{\fc32}(2M-r)^{\fc12},
\end{align*}
where $r\in[0,2M]$ and $H\geq 0$.
The function $k(H,r)$ comes from the inequality $l(r;H,c)=\fc{1}{\st{-h(r)}}\lt(-Hr-\fc{c}{r^2}\rt)>1$,
and it will determine the domain of an {\small SS-CMC}
solution in the Schwarzschild interior which maps to the Kruskal extension {\tt I\!I}.
In fact, the function $k(H,r)$ for $H\geq 0$ and $\tilde{k}(H,r)$ for $H\leq 0$ are symmetric about $r$-axis,
so for the construction of {\small TSS-CMC} hypersurfaces in the Kruskal extension $T\geq 0$ part, in Figure~\ref{CMCpaperFamilyKtoHypersurfaces}~(b),
we choose the curve $\ga(-c)$ by the reflection of the curve $\ga(c)$ with respect to the $r$-axis.
Each point on $\ga(-c)$ will one-to-one correspond to a {\small TSS-CMC} hypersurface $\Sa_{H(-c),-c}$ in the Kruskal extension,
and $\Sa_{H(-c),-c}$ intersects $T$-axis at $T=\st{2M-(-y)^{-1}(-c)}\mm{e}^{\fc{(-y)^{-1}(-c)}{4M}}$, as Figure~\ref{CMCpaperFamilyKtoHypersurfaces}~(c) showed.
Furthermore, hypersurfaces $\Sa_{H(-c),-c}$ and $\Sa_{H(c),c}$ are symmetric about $X$-axis.

Finally, we collect {\small TSS-CMC} hypersurfaces $\{\Sa_{H(c),c}\}, c\in\mb{R}$.
Remark that when the parameter $c$ ranges from $\iy$ to $-\iy$, the mean curvature $H$ ranges from $-\iy$ to $\iy$.
In next section, we will show that $\{\Sa_{H(c),c}\}, c\in\mb{R}$, forms a {\small TSS-CMC} foliation in the Kruskal extension.

\section{Existence of TSS-CMC foliation} \label{Proof}
Now we are ready to prove the family $\{\Sa_{H(c),c}\}, c\in\mb{R}$,
we constructed in section~\ref{subsectionconstruction} foliates the Kruskal extension.

\begin{thm}
There exists a family of hypersurfaces $\{\Sa_{H(c),c}\}$, $c\in\mb{R}$, in the Kruskal extension satisfying the following properties:
\begin{itemize}
\item[\rm(a)] Every $\Sa_{H(c),c}$ is a $T$-axisymmetric, spacelike, spherically symmetric, constant mean curvature hypersurface.\\[-10mm]
\item[\rm(b)] Any two hypersurfaces in $\{\Sa_{H(c),c}\}$ are disjoint.\\[-10mm]
\item[\rm(c)] Every point $(T',X')$ in the Kruskal extension belongs to $\Sa_{H(c'),c'}$ for some $c'\in\mb{R}$.\\[-10mm]
\item[\rm(d)] When $\{\Sa_{H(c),c}\}$ foliates the Kruskal extension from the bottom to the top,
the corresponding constant mean curvature $H$ ranges from $-\iy$ to $\iy$ and the parameter $c$ ranges from $\iy$ to $-\iy$.\\[-10mm]
\item[\rm(e)] $\{\Sa_{H(c),c}\}$ is invariant under the reflection with respect to the $X$-axis.
\end{itemize}
\end{thm}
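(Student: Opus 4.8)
The plan is to obtain (a), (e), (d) directly and to reduce (b), (c) to a single monotonicity statement. Property (a) holds by the construction of Section~\ref{subsectionconstruction}: each point of $\ga(c)$ was matched with a genuine {\small TSS-CMC} leaf $\Sa_{H(c),c}$, glued from the two branches $f'\gtrless 0$ of (\ref{SSCMCeqn}) across the $T$-axis. Property (e) holds because the $c$- and $-c$-leaves were defined as $X$-axis reflections of one another, using the symmetry of $\tilde k(H,r)$ and $k(H,r)$ about the $r$-axis. For (d) I would differentiate the defining relation $c=-Hr^3+r^{\fc32}(2M-r)^{\fc12}$ along $\ga$; writing $H=H(r)$ with $r=y^{-1}(c)$ and inserting the identity $\fc{\pl\tilde k}{\pl r}=\fc{3y}r+\fc{r^{\fc12}(-3M+r)}{(2M-r)^{\fc12}}$ from the proof of Proposition~\ref{Proposition1}, one gets
\begin{align*}
r^3\,\fc{\mm{d}H}{\mm{d}r}=\fc{\pl\tilde k}{\pl r}-\fc{\mm{d}y}{\mm{d}r},
\end{align*}
which is nonzero on all of $(0,2M]$ \emph{exactly} by the first line of (\ref{Conditions}). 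Thus $H$ is strictly monotone along $\ga$; together with $H=0$ at $c=0$, $H\to-\iy$ as $c\to\iy$ (where $r\to0^+$, $y\to\iy$), and the reflection (e), this yields (d).

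For (b) and (c) I would first use (e) to reduce to foliating the region $T\le0$ by the leaves with $c\in[0,\iy)$, noting $\Sa_{H(0),0}$ is the $X$-axis $T\ev0$; each such leaf is a graph $T=F_c(X)$ with $F_c$ even and $|F_c'|<1$. The whole of (b), (c) then follows from one claim: \emph{for every fixed $X$, the map $c\mapsto F_c(X)$ is a strictly decreasing continuous bijection of $[0,\iy)$ onto $(-\st{X^2+2M},\,0]$}, sending $c=0$ to the $X$-axis and $c\to\iy$ to the past singularity $T^2-X^2=2M$. Indeed, strict monotonicity gives disjointness (b), while surjectivity onto the full admissible range $(-\st{X^2+2M},0]$ says the equation $F_c(X')=T'$ is solvable for any point $(T',X')$ with $T'\le0$, giving (c); the $T>0$ case is then handled by (e). As a first, purely one-dimensional check I would confirm the $X=0$ endpoints along the $T$-axis: $\tau(r)=-\st{2M-r}\,\mm{e}^{\fc r{4M}}$ has $\tau'(r)=\mm{e}^{\fc r{4M}}\fc{r}{4M\st{2M-r}}>0$, so $c\mapsto\tau(y^{-1}(c))$ is already a strictly decreasing bijection of $[0,\iy)$ onto $(-\st{2M},0]$, matching the claim.

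The crux is the monotonicity $\pl_cF_c<0$. One half is clean and, pleasantly, is powered by the very fact that $H$ varies: at any point where two leaves with $c_1<c_2$ (hence $H(c_1)>H(c_2)$) are \emph{tangent}, the shared values of $F,F'$ make all lower-order terms of (\ref{CMCequation}) coincide, so that
\begin{align*}
F_{c_2}''-F_{c_1}''=\fc{12M\mm{e}^{-\fc r{4M}}}{\st r}\,(1-(F')^2)^{\fc32}\bigl(H(c_1)-H(c_2)\bigr)>0,
\end{align*}
contradicting the second-order condition $F_{c_2}''\le F_{c_1}''$ forced by tangency from below; hence the leaves can never touch tangentially and the strict order set on the $T$-axis cannot be lost that way. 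The genuinely delicate point, which the maximum principle does not settle, is to exclude \emph{transversal} crossings and to secure the limit $c\to\iy$. For this I would differentiate the explicit solution $f(r;H,c,\bar c)$ in $c$ through its two essential channels — the mean curvature $H=H(c)$ and the moving endpoint $\tilde r_{H,c}^\pm$, the additive constant being pinned at $\bar c=0$ by the symmetric gluing at $t=0$ so that the intercept $\tau$ is carried by $\tilde r_{H,c}^\pm$ — and combine $\fc{\mm{d}H}{\mm{d}c}<0$ with $\fc{\mm{d}}{\mm{d}c}\tilde r_{H,c}^\pm<0$ to fix the sign of $\pl_cF_c$. I expect the main obstacle to lie precisely in matching these variations across the horizon $r=2M$, where the leaf passes between the two Schwarzschild charts: each chart is routine, but controlling the derivative of the improper endpoint integrals and their $C^1$-gluing at $r=2M$ is where the estimate is most likely to demand real care.
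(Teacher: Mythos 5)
Your outline of (a), (d), (e) is fine and matches the construction, and your tangency computation from (\ref{CMCequation}) is correct as far as it goes. But the proof has a genuine gap exactly where you flag it: everything in (b) and (c) is made to rest on the claim that $c\mapsto F_c(X)$ is a strictly decreasing continuous bijection of $[0,\iy)$ onto $(-\st{X^2+2M},0]$ for \emph{every} fixed $X$, and you never establish it. The maximum-principle step only rules out ordered tangencies; as you concede, it does not exclude transversal crossings, and the plan to fix this by differentiating the explicit solution $f(r;H,c,\bar c)$ in $c$ through the moving endpoint $\tilde r^{\pm}_{H,c}$ and across the horizon is left as a program with the hard estimates unexecuted. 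Likewise the surjectivity needed for (c) is verified only at $X=0$ (the $T$-intercept computation), not for general $X$, and continuity of $F_c(X)$ in $c$ is asserted rather than proved. As written, (b) and (c) are reduced to an unproved lemma that is essentially as hard as the theorem.

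The paper closes this gap by a different, softer argument that imports two results from \cite{KWL} instead of computing $\pl_c F_c$. For (b), given $c_1<c_2$ it interpolates a third leaf $\Sa_{H,c}$ with mean curvature $H=H(c_1)$ and the same $T$-intercept as $\Sa_{H(c_2),c_2}$; then $T_{H(c_2),c_2}(X)<T_{H,c}(X)$ for $X\neq 0$ follows from monotonicity in $H$ at fixed intercept ($\fc{\pl f'}{\pl H}>0$), and $T_{H,c}(X)<T_{H(c_1),c_1}(X)$ follows because both lie in the fixed-$H$ foliation $\{\Sa_H\}$ of \cite{KWL} and their intercepts are ordered. Chaining the two inequalities gives disjointness with no need to control the $c$-derivative of the improper endpoint integrals or the $C^1$-gluing at $r=2M$. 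For (c), the paper uses the Dirichlet existence and uniqueness theorem of \cite{KWL} to produce, for each $H\le 0$, the unique leaf through $(T',\pm X')$, traces the corresponding curve $\ba(c)$ in the $t$-$r$ plane (continuous by the continuous dependence on $H$ proved in \cite{KWL}), and intersects it with $\ga(c)$ by the Intermediate Value Theorem. If you want to salvage your route, you should either prove your monotone-bijection claim in full (which is the real content) or restructure along the paper's comparison, which converts the two-parameter variation in $(H,c)$ into two one-parameter monotonicity facts that are already known.
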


From our construction, properties (a), (d), and (e) are automatically true.
Furthermore, for (b) and (c), it suffices to prove the $T\leq 0$ part because of the symmetry property (e).
\begin{proof}[Proof of property {\rm(b)}]
Here we prove the case if {\small TSS-CMC} hypersurfaces lie between $T=0$ and the hyperbola $X^2-T^2=(R-2M)\mm{e}^{\fc{R}{2M}}$ with $T<0$,
and it is similarly proved if {\small TSS-CMC} hypersurfaces lie below the hyperbola.
Given any two {\small TSS-CMC} hypersurfaces $\Sa_{H(c_1),c_1}$ and $\Sa_{H(c_2),c_2}$ with $c_1<c_2$,
we compare these two hypersurfaces with $\Sa_{H,c}$, which is a {\small TSS-CMC} hypersurface with mean curvature $H=H(c_1)$
and has the same $T$-intercept as $\Sa_{H(c_2),c_2}$.
See Figure~\ref{CMCFoliationProof}~(a).
Since $H=H(c_1)$, two points in the $t$-$r$ plane corresponding to $\Sa_{H,c}$ and
$\Sa_{H(c_1),c_1}$ lie on the same function $\tilde{k}_{H(c_1)}(r)$.
Because $\Sa_{H,c}$ and $\Sa_{H(c_2),c_2}$ have the same $T$-intercept,
two points in the $t$-$r$ plane corresponding to $\Sa_{H,c}$ and $\Sa_{H(c_2),c_2}$ have the same $r$ value.

\begin{figure}[h]
\centering
\psfrag{r}{$r$}
\psfrag{t}{$t$}
\psfrag{X}{$X$}
\psfrag{T}{$T$}
\psfrag{G}{$\ga(c)$}
\psfrag{A}{$\aa$}
\psfrag{a}{(a)}
\psfrag{b}{(b)}
\psfrag{E}{$c_1$}
\psfrag{D}{$c$}
\psfrag{C}{$c_2$}
\psfrag{P}{$\Sa_{H(c_2),c_2}$}
\psfrag{Q}{$\Sa_{H,c}$}
\psfrag{R}{$\Sa_{H(c_1),c_1}$}
\psfrag{H}{\footnotesize $\stackrel{\de\Sa_{H(c),c}}{c>C}$}
\psfrag{K}{$\Sa_{H(C),C}$}
\psfrag{I}{\footnotesize $\stackrel{\de\Sa_{H(c),c}}{0<c<C}$}
\includegraphics[height=55mm,width=104mm]{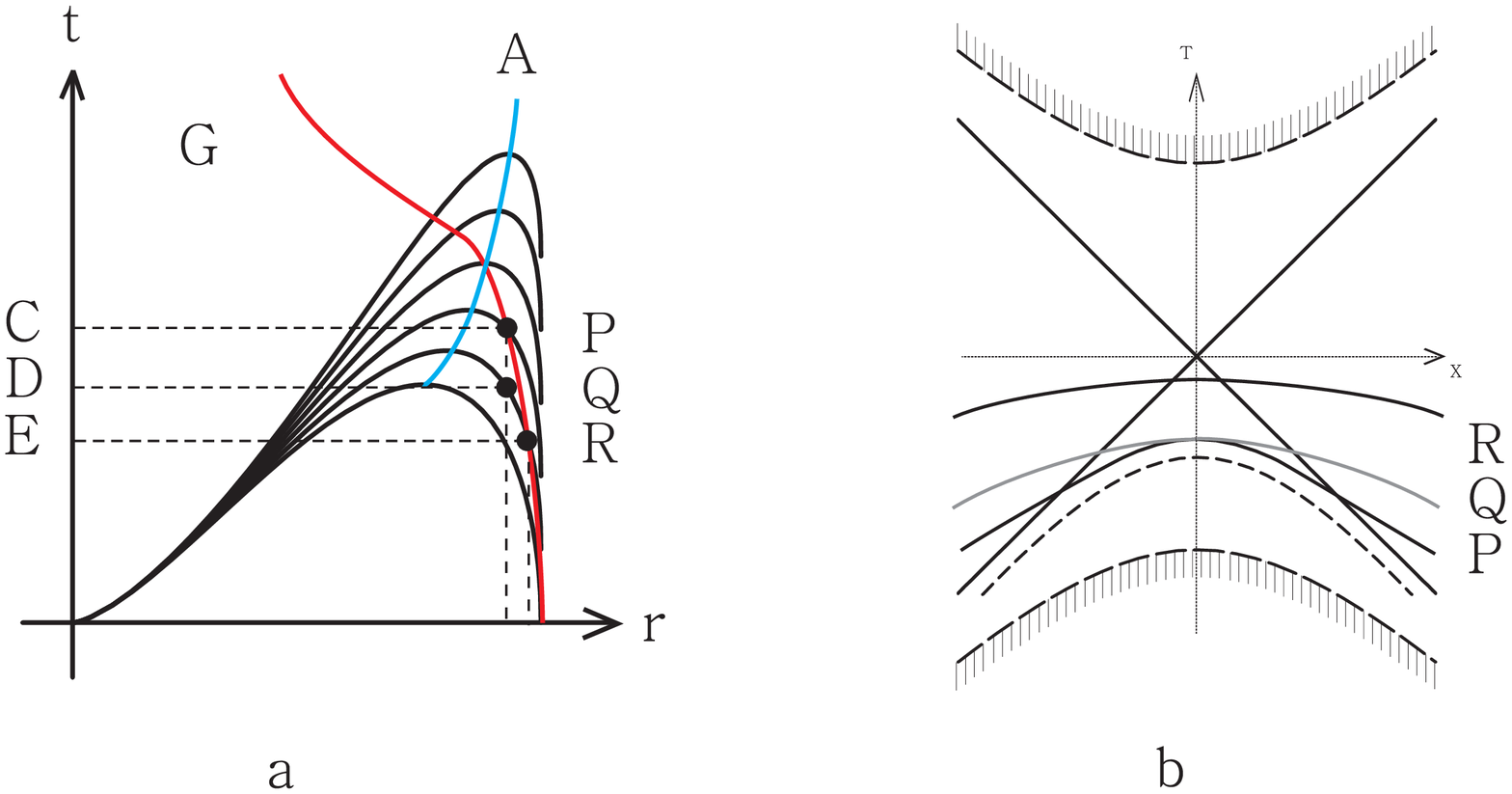}
\caption{$\Sa_{H(c_1),c_1}$ and $\Sa_{H(c_2),c_2}$ are disjoint by comparing with $\Sa_{H,c}$.} \label{CMCFoliationProof}
\end{figure}

Since $c_1<c_2$, we have $H(c_2)<H(c_1)=H$, and it implies $T$ values of $\Sa_{H(c_2),c_2}:(T_{H(c_2),c_2}(X),X)$ and $\Sa_{H,c}:(T_{H,c}(X),X)$
in the Kruskal extension satisfy
\begin{align}
T_{H(c_2),c_2}(X)<T_{H,c}(X)\quad \mx{for all } X\neq 0. \label{comparison}
\end{align}
The inequality (\ref{comparison}) holds because of $\fc{\pl f'}{\pl H}>0$ for all $X\geq 0$ in the Schwarzschild spacetime.
Furthermore, in paper \cite{KWL},
we proved that for every fixed $H\in\mb{R}$,
the curve formed by the union of the graphs of $\tilde{k}_H(r)$ and $k_H(r)$
corresponds to a {\small TSS-CMC} hypersurfaces family $\{\Sa_{H}\}$, and $\{\Sa_{H}\}$ foliates the Kruskal extension.
Since both $\Sa_{H,c}:(T_{H,c}(X),X)$ and $\Sa_{H(c_1),c_1}:(T_{H(c_1),c_1}(X),X)$ belong to $\{\Sa_{H}\}$,
and their $T$-intercepts satisfy $T_{H,c}(0)<T_{H(c_1),c_1}(0)$, we have $T_{H,c}(X)<T_{H(c_1),c_1}(X)$.
Therefore, $T_{H(c_2),c_2}(X)<T_{H(c_1),c_1}(X)$ for all $X$, and hence $\Sa_{H(c_1),c_1}$ and $\Sa_{H(c_2),c_2}$ are disjoint.
See Figure~\ref{CMCFoliationProof}~(b).
\end{proof}

\begin{proof}[Proof of property {\rm(c)}]
Recall that in \cite[Theorem 3 and Theorem 7]{KWL},
we proved the existence and uniqueness of the Dirichlet problem for the {\small TSS-CMC} equation with symmetric boundary data.
In other words, for any fixed $H\in\mb{R}$ and given $(T',X')$ in the Kruskal extension,
there exists a unique {\small TSS-CMC} hypersurface, denoted it by $\Sa_{H,c(H)}$, passing through $(T',X')$ and $(T',-X')$.

Here we prove the case if $(T',X')$ lies between $T=0$ and the hyperbola $X^2-T^2=(R-2M)\mm{e}^{\fc{R}{2M}}$ with $T<0$,
and it is similarly proved if $(T',X')$ lies in other regions.
See Figure~\ref{CMCFoliationProofC}.
When $H=0$, there exists a unique value $c(0)$ such that $(T',X')\in\Sa_{H=0,c(0)}$.
For every $H\leq 0$, we can find a point on the graph of $\tilde{k}_H^-(r)$ corresponding to a {\small TSS-CMC} hypersurface $\Sa_{H,c(H)}$
passing through $(T',X')$.
Set $\ba(c)$ be all such points.
We know that $\ba(c)$ is a continuous curve because Theorem~6 in \cite{KWL} shows these solutions are continuous varied with the mean curvature.
When $H\to-\iy$, we have $\tilde{k}_H^-(r)\to\iy$, so the $t$ value of $\ba(c)$ tends to infinity and $r$ value of $\ba(c)$ tends to $2M$.
By the Intermediate Value Theorem, two curves $\ga(c)$ and $\ba(c)$ must intersect at some point $c'$,
and hance $(T',X')\in\Sa_{H(c'),c'}$.

\begin{figure}[h]
\centering
\psfrag{r}{$r$}
\psfrag{t}{$t$}
\psfrag{G}{$\ga(c)$}
\psfrag{A}{$\aa$}
\psfrag{B}{$\ba(c)$}
\psfrag{E}{$c_1$}
\psfrag{D}{$c(0)$}
\psfrag{C}{$c$}
\psfrag{P}{$\Sa_{H(c'),c'}$}
\psfrag{Q}{$\Sa_{H=0,c(0)}$}
\includegraphics[height=45mm,width=45mm]{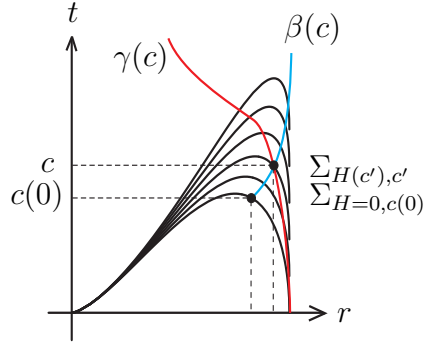}
\caption{Prove the family $\{\Sa_{H(c),c}\}$ covering the Kruskal extension.} \label{CMCFoliationProofC}
\end{figure}
\end{proof}

\section{Discussions} \label{discussion}
We consider a family of $T$-axisymmetric, spacelike, spherically symmetric ({\small TSS-CMC}) hypersurfaces in the Kruskal extension.
The mean curvature is constant on each slice but changes from slice to slice.
First we construct this {\small TSS-CMC} hypersurfaces family with one more symmetry, called $X$-axis symmetry.
That is, after getting {\small TSS-CMC} hypersurfaces in region $T\leq 0$,
we use the reflection with respect to $X$-axis to derive {\small TSS-CMC} hypersurfaces in region $T\geq 0$.
Based on the result of {\small TSS-CMC} hypersurfaces foliation with fixed mean curvature in \cite{KWL},
we prove these {\small TSS-CMC} hypersurfaces foliate the Kruskal extension.

Two functions $\tilde{k}(H,r)$ and $k(H,r)$ play important roles in this foliation argument.
Each point on the graphs of $\tilde{k}(H,r)$ and $k(H,r)$ will one-to-one correspond to a {\small TSS-CMC} hypersurface $\Sa_{H,c}$,
where $H,r,c$ satisfy relations $c=-Hr^3+r^{\fc32}(2M-r)^{\fc12}$ or $c=-Hr^3-r^{\fc32}(2M-r)^{\fc12}$, respectively.
The argument in section~\ref{subsectionconstruction}
indicates that if we find a curve $\ga(c)$ which intersects every $\tilde{k}_H(r)$ and $k_H(r)$ exactly once,
and $\ga(c)$ is the union of two monotonic functions, then we can prove the {\small TSS-CMC} foliation property.

In Proposition~\ref{Proposition1}~(\ref{Conditions}),
there are three conditions the curve $\ga(c)$ should be satisfied.
The first condition $\fc{\mm{d}y}{\mm{d}r}\neq\fc{3y}{r}+\fc{r^{\fc12}(-3M+r)}{(2M-r)^{\fc12}}$
implies that the curve $\ga(c)$ intersects every $\tilde{k}_H(r)$ exactly once.
Second condition $\fc{\mm{d}y}{\mm{d}r}<0 \mx{ for all } r\in(0,2M)$ indicates that
$c$ is decreasing and $H$ is increasing along {\small TSS-CMC} hypersurfaces.
The condition $\lim\ls_{r\to 0^+}y(r)=\iy$ coupled with symmetry $\lim\ls_{r\to 0^+}-y(r)=-\iy$
state that mean curvatures of {\small TSS-CMC} hypersurfaces range from $-\iy$ to $\iy$.
The condition $y(2M)=0$ will impose that the {\small TSS-CMC} hypersurface passing through the bifurcation sphere $(T,X)=(0,0)$ is the maximal hypersurface,
which is $T\ev 0$. In this case, we can use the $X$-axis symmetry to get the whole {\small TSS-CMC} family $\{\Sa_{H,c}\}$.

The first remark is that the existence of $\ga(c)$ is not unique.
This is because $\fc{\mm{d}y}{\mm{d}r}\neq\fc{3y}{r}+\fc{r^{\fc12}(-3M+r)}{(2M-r)^{\fc12}}$ is an open condition.
Therefore, we can find many different {\small TSS-CMC} foliations.

The second remark is that the condition $y(2M)=0$ is more flexible.
In fact, we can consider a more general setting that to solve the function $y(r)$ in Proposition~\ref{Proposition1}
by satisfying (\ref{Conditions}) but replacing the condition $y(2M)=0$ with $y(2M)=A$, where $A\in\mb{R}$.
There still exists a function, denoted by $y_A(r)$ in the general setting.
Let the curve $\ga_A(c)$ be the graph of $y_A(r)$ with parameter $c=y_A(r)$.
So we have derived {\small TSS-CMC} hypersurfaces below $\Sa_{H(A),A}$.
How do we get {\small TSS-CMC} hypersurfaces above $\Sa_{H(A),A}$?
Recall that these {\small TSS-CMC} hypersurfaces are determined by the function $k(H,r)$.
Notice that $k(H,r)=-\tilde{k}(-H,r)$,
so we consider the curve $\ga_{-A}(c)+2A$, which is the curve by moving $\ga_{-A}(c)$ along the $t$ direction by $2A$.
Then $\ga_A(c)$ and $\ga_{-A}(c)+2A$ are joined at $r=2M$ and $\ga_A(c)\cup(\ga_{-A}(c)+2A)$
intersects every $\tilde{k}_H(r)$ and $k_H(r)$ exactly once.
Hence the corresponding {\small TSS-CMC} hypersurfaces family $\{\Sa_{H(c),c}\}, c\in\mb{R}$ foliate the Kruskal extension but it is not $X$-axisymmetric.

Next, every {\small TSS-CMC} foliation can be changed as a {\small SS-CMC} foliation without $T$-axisymmetric property by the Lorentzian isometry.
All of the foliations have the property that mean curvatures range from $-\iy$ to $\iy$.
This phenomena is more close to the definition of the {\small CMC} time function.
However, only {\small TSS-CMC} hypersurfaces across the Kruskal regions {\tt I} and {\tt I'} are Cauchy hypersurfaces.

Finally, our {\small TSS-CMC} foliations construction extends the results and discussions in Malec and \'{O} Murchadha's paper \cite{MO2}.
They considered {\small TSS-CMC} foliations where the mean curvature $H$ and the {\small TSS-CMC} hypersurface parameter $c$ are proportional, that is, $c=-8M^3H$,
then there is a family of {\small TSS-CMC} hypersurfaces so that
$H$ ranges from minus infinity to plus infinity, but all hypersurfaces intersect at the origin in the Kruskal extension.
In order to break the phenomena of intersection, we consider the {\small TSS-CMC} hypersurfaces family with nonlinear relation between $c$ and $H$,
and both variables range for all real numbers.
This consideration will fulfill the {\small TSS-CMC} foliation with varied mean curvature in each slice in the Kruskal extension.

\fontsize{11}{14pt plus.5pt minus.4pt}\selectfont

\vspace*{5mm}
\noindent{Kuo-Wei Lee}\\
\noindent{Department of Mathematics, National Taiwan University, Taipei, Taiwan}\\
\noindent{E-mail: \verb+d93221007@gmail.com+}
\end{document}